\newtheorem{theorem}{Theorem}
\newtheorem{definition}{Definition}
\newtheorem{lemma}{Lemma}
\newcommand{\mathcalbold}[1]{\ensuremath{\boldsymbol{\mathcal{#1}}}}
\newcommand{\transconj}[1]{\ensuremath{{#1}^{\mathrm{\scriptscriptstyle{H}}}}}
\title{Quasi-convexity of the asymptotic channel MSE in regularized semi blind estimation}
\author{\IEEEauthorblockN{Abla Kammoun\IEEEauthorrefmark{1}(Contact Author), Karim Abed-Meraim\IEEEauthorrefmark{1}$^{,}$ and Sofi\`ene Affes\IEEEauthorrefmark{5}\\}
\IEEEauthorblockA{\IEEEauthorrefmark{1}{\small T\'{e}l\'{e}com-ParisTech 46, rue Barrault, 75634 Paris Cedex 13, France}\\}
\IEEEauthorblockA{\IEEEauthorrefmark{5}{\small INRS-EMT, 800, de la Gaucheti\`ere, Montreal, QC, H5A 1K6, Canada\\}
\small Email: abed, kammoun@tsi.enst.fr, affes@emt.inrs.ca}}
\begin{document}

\maketitle

\begin{abstract}
In this paper,  the quasi-convexity of a sum of quadratic fractions in the form $\sum_{i=1}^n \frac{1+c_i x^2}{\left(1+d_ix\right)^2}$ is demonstrated  where $c_i$ and $d_i$ are strictly positive scalars, when defined on the positive real axis $\mathbb{R}^{+}$. It will be shown that this quasi-convexity guarantees it has a unique local (and hence global) minimum.

Indeed, this problem arises when considering the optimization of the weighting coefficient in regularized semi-blind channel identification problem, and more generally, is of interest in other contexts where we combine two different estimation criteria. 

Note that V. Buchoux {\it et.al} have noticed by simulations that the considered function has no local minima except its unique global minimum but this is the first time this result, as well as the quasi-convexity of the function is  proved theoretically.

\IEEEkeywords{AAsymptotic analysis, Channel estimation, Exponential polynomial, Minimum MSE, Quasi-convexity,  Regularization, Semi-blind estimation }
\end{abstract}
\section{Introduction}
Many parameter estimation techniques use combined criteria to exploit different features or properties of the considered signals and hence improve the estimation performance. Examples of such combined techniques include the blind source separation (BSS) method in \cite{klajman}, and the blind equalization method in \cite{duhamel} where second and higher order statistics based criteria are combined to restore the source signals, and   the channel equalization and offset estimation technique in \cite{djebbar}, where again two criteria based on two different features of the transmit signals are jointly used to improve the receiver performance. 

In \cite{taoufik}, a similar approach is used for channel shortening in OFDM systems, and in \cite{geli,palicot}, semi-blind channel identification methods are considered where data-aided and blind techniques are combined together to shorten the training sequence while preserving a high channel estimation quality.

When combining two criteria, one uses a weighting parameter that needs to be optimized. 
In \cite{buchoux}, the weighting coefficient is optimized in such a way the asymptotic mean square error (MSE) of the channel estimate is minimum. 

The latter is shown to be a non-linear function and its optimization in \cite{buchoux} is done numerically using a line search algorithm. 

In this paper, we demonstrate that the previous asymptotic MSE function is quasi-convex which provides a guarantee that numerical optimization always leads to the desired optimal weighting parameter value. 

Moreover, one can observe that many asymptotic MSE functions have similar forms to that in \cite{buchoux} and thus, we believe the result given in this paper might be extended and adapted to other problems, where an optimal weighting coefficient is needed to combine two contrast functions. As an example, we can cite the case where a contrast function is linearly combined with an  MSE criterion. Referring to the work in \cite{ICASSP.cardoso.93}, we can easily show that in this case, the expression of the asymptotic MSE has the same form as the one described in \cite{buchoux}.

From the mathematics point of view, our work can be viewed as a contribution to the study of the roots of real exponential polynomials. It should be noted that this issue has been studied for general cases in \cite{Wielonsky.04}, where interesting results about the number of roots of real exponential polynomials with real frequencies have been presented. Unfortunately, these results yield a loose bound for the number of roots of the considered exponential polynomial and thus are of no interest for our particular case given by \eqref{eq:f}. We have provided in our work an original proof that takes into consideration the specifications of the considered exponential polynomial. 

The paper is organized as follows. Section II summarizes the results in \cite{buchoux} and shows that the considered MSE optimization problem can be cast into the optimization problem of a sum of quadratic fractions of the form:
$$\sum_{i=1}^n \frac{1+c_i x^2}{\left(1+d_ix\right)^2}, \hspace{0.1cm} c_i>0, d_i> 0.
$$
Section III and IV are completely devoted to the derivation and the proof of the quasi-convexity of the asymptotic MSE function. In particular, section III contains  some basic notions and results about quasi-convex functions. 
Conclusion and final remarks are given in section V.

{\bf Notation:} Operators $^{\tiny \mbox{H}}$, $^{-1}$ and $\mathrm{Tr}$ denote Hermitian, matrix inversion  and trace operators. Moreover, the real  and imaginary parts of a complex $z$ are denoted respectively by $\mathrm{Re}(z)$ and $\mathrm{Im}(z)$.

\section{Regularized semi-blind channel estimation}
In many signal processing applications, the major problem is to find out how to estimate some parameters at a low cost and with a good accuracy. The best estimate  we can have is obtained by taking into account all the information that we can get about the desired parameter. This approach involves in general high computational complexity, thus restricting its interest to only theoretical issues. 

Actually, in practice, suboptimal approaches retain only one kind of information on which they are based to derive a minimization problem with only a single criterion. 
An intermediate approach that is based on linearly combining competitive criteria has been recently proposed in many signal processing applications. For instance, in \cite{taoufik}, the channel shortening is being improved by linearly combining the null tones criterion with that of the guard interval. Also in the context of estimating sparse parameter vectors, $\mathcal{L}_p$ (where $p<1$) quasi-norms are often linearly combined to standard statistical criteria, thus allowing to take into account the sparsity of the desired solution \cite{jalil}. 

The optimal selection of the regularizing parameter that makes the best trade-off between two different criteria is important for the considered parameter estimation problem.  To the best of our knowledge, the minimization of the  mean square estimation error  with respect to the regularizing coefficient has been only analysed rigorously in \cite{buchoux}, in the context of semi-blind channel estimation. 

Since we will heavily rely on the asymptotic expression derived in \cite{buchoux}, it may be illuminating to provide a brief overview on the regularized semi-blind estimation technique. 

Regularized semi-blind estimation technique combines blind and training based criteria. They have been introduced first in the context of Single Input Multiple Output (SIMO) systems . In this case, if ${s}_k$ denotes the unit power transmitted signal, the vector received by the $N$ receiving antennas ${\bf y}_k$ is given by:
\begin{equation}
{\bf y}_k=\sum_{l=0}^L {\bf h}_l{s}_{k-l}+{\bf v}_k
\end{equation}
where ${\bf h}_l$ is the $N\times 1$ vector of the  $l$-th tap of the channel impulse response and ${\bf v}_k$ denotes the additive Gaussian noise. 
We assume that each frame is composed of training and data period, (see fig \ref{fig:multiplex}).
\begin{figure}[htbp]
\begin{center}
 \includegraphics[scale=0.5]{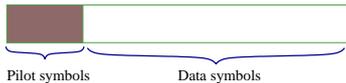}
\caption{Time-Multiplexed Training Scheme}
\label{fig:multiplex}
\end{center}
\end{figure}
 The training period corresponds to the transmission of $\ell$ known symbols which are often referred  to as pilots, whereas the data period corresponds to the transmission of $p$ data symbols.

The blind criterion is based on the statistical properties of the received signal in the data period and can be put on the form:
\begin{equation}
\min_{\|{\bf h}\|=1} \transconj{\bf h}{\bf Q}{\bf h}
\label{eq:blind}
\end{equation}
where ${\bf h}$ is the channel parameter and ${\bf Q}$ is a matrix that depends solely on the statistical properties of the received signal. 
On the other hand, the training based criterion can be expressed as;
\begin{equation}
\min_{{\bf h}}\|{\bf y}-{\bf Sh}\|^2
\label{eq:training}
\end{equation}
where ${\bf y}$ is the received signal and ${\bf S}$ is a matrix that depends on the pilot symbols. 
In contrast to blind estimation methods, training based techniques are more sensitive to noise and entail inefficient bandwidth utilization. However, blind methods are more complex, estimate the channel only up to a scalar ambiguity and are often non-robust to modelization errors (e.g. channel order overestimation errors) \cite{Zeng}). For these reasons, it might be interesting to combine linearly both criteria so as to resolve the drawbacks inherent to blind and training based techniques. Hence, the semi-blind  estimate is the one that minimizes:
\begin{equation}
\min_{{\bf h}}\|{\bf y}-{\bf Sh}\|^2+\lambda p\transconj{\bf h}{\bf Q}{\bf h}
\label{eq:semi_blind}
\end{equation}
where $\lambda>0$ is the regularizing coefficient and $p$ is the length of the information sequence.
Note that the semi-blind approach in \eqref{eq:semi_blind} outperforms the blind approach in \eqref{eq:blind} and the non-blind approach in \eqref{eq:training} only if the regularizing scalar $\lambda$ is chosen properly. In particular, this would be the case if $\lambda$ is selected in such a way the asymptotic estimation error variance is minimized. Our result is also useful to derive a relation between the optimal MSE$^*$ and the percentage of training symbols which can be adjusted to achieve a target MSE performance. 

It has been proved in \cite{buchoux} that the trace of the asymptotic\footnote{Asymptotic refers to the case where $p\to\infty$, $\ell\to\infty$ and the ratio $\frac{p}{\ell}\to\gamma$.} estimation mean-square error (MSE) is proportional to:
$$
\mathrm{MSE} \hspace{0.1cm} \propto \hspace{0.1cm} \mathrm{Tr}\left\{\left({\bf I}+\lambda \gamma{\bf Q}\right)^{-1}\left({\bf I}+\lambda^2\gamma\mathcalbold{M}({\bf h})\right)\left({\bf I}+\lambda \gamma{\bf Q}\right)^{-1}\right\}
$$
where $\gamma=\frac{p}{\ell}$,  and $\mathcalbold{M}({\bf h})$ is a Hermitian matrix that has the same row and column space as ${\bf Q}$ (meaning that if ${\bf Q}={\bf U}{\bf D}\transconj{\bf U}$ is the eigenvalue decomposition of ${\bf Q}$, $\mathcalbold{M}({\bf h})$ writes as $\mathcalbold{M}({\bf h})={\bf U}{\bf A}{\bf }\transconj{\bf U}$, where ${\bf A}$ is a given Hermitian matrix.). 

Using the eigenvalue decomposition of ${\bf Q}$, it can be easily verified that the $\mathrm{MSE}$ is proportional to:
\begin{equation}
\mathrm{MSE}\hspace{0.1cm} \propto \hspace{0.1cm} \sum_{i}\frac{1+\lambda^2\gamma{ a}_{ii}}{\left(1+\lambda\gamma{ d}_{ii}\right)^2}
\end{equation}
where ${a}_{ii}>0 $ (resp. ${d}_{ii}>0$) denote the diagonal elements of ${\bf A}$ (resp. the non zero diagonal elements of ${\bf D}$).

Note that in \cite{buchoux} and \cite{gretsi}, it was noticed by simulations that the $\mathrm{MSE}$ has a unique local (global) minimum with respect to $\lambda$, but to the best to our knowledge, until now, this result has not been proved in any previous work.

\section{Quasi-convexity of the Mean square error}
For the reader convenience, we recall hereafter the definition and also some results about quasi-convex functions. (we refer the reader to \cite{boyd} for further information).
\begin{definition}

 A  real valued function $f$ is said to be quasi-convex if its domain of definition and all its sublevel sets:
$$
S_{\alpha}=\left\{x\in \mathrm{\bf dom}f | f(x)\leq \alpha\right\}
$$
for $\alpha\in\mathbb{R}$, are convex, where $\mathrm{\bf dom}f$ denotes the set over which the function $f$ is defined.
\end{definition}
{\bf Examples of quasi-convex functions}
To illustrate this concept, we provide in fig \ref{fig:example} some examples of quasi-convex functions. As we can see, we  note that a concave and also a non convex function can be also quasi-convex. 
\begin{figure}
 \begin{center}
  \includegraphics[scale=0.4]{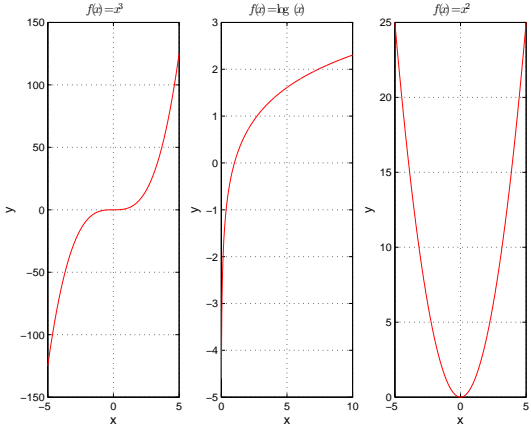}
\caption{Examples of quasi-convex functions}
\label{fig:example}
 \end{center}
\end{figure}

Like convex functions, quasi-convex functions satisfy a modified Jensen inequality which is given by:

\begin{theorem}
A function $f$ is quasi-convex if and only if $\mathrm{\bf dom}f$ is convex and for all $x,$ $y\in  \mathrm{\bf dom}(f)$ and $0\leq \theta \leq 1$
$$
f(\theta x+ (1-\theta) y)\leq \mathrm{max}\left\{f(x),f(y)\right\}.
$$
\end{theorem}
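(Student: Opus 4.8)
The plan is to prove the two implications of the biconditional separately, using the definition of quasi-convexity in terms of convex sublevel sets. Since convexity of $\mathrm{\bf dom}f$ is hypothesized on both sides of the equivalence, it carries over directly, so I only need to relate the sublevel-set condition to the modified Jensen inequality and can state at the outset that the domain convexity is inherited verbatim from the hypotheses.

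For the forward direction I would assume $f$ is quasi-convex, i.e. every sublevel set $S_{\alpha}$ is convex, and fix arbitrary $x,y\in\mathrm{\bf dom}f$ together with $\theta\in[0,1]$. The key step, and really the only idea in the whole argument, is to choose the threshold $\alpha=\max\{f(x),f(y)\}$. With this choice both $x$ and $y$ lie in $S_{\alpha}$ by construction, and convexity of $S_{\alpha}$ then forces $\theta x+(1-\theta)y\in S_{\alpha}$, which upon unwinding the definition of $S_{\alpha}$ is precisely the desired inequality $f(\theta x+(1-\theta)y)\leq\max\{f(x),f(y)\}$.

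For the converse I would assume the modified Jensen inequality and show each sublevel set is convex. Fixing an arbitrary $\alpha\in\mathbb{R}$ and any two points $x,y\in S_{\alpha}$, I have $f(x)\leq\alpha$ and $f(y)\leq\alpha$, hence $\max\{f(x),f(y)\}\leq\alpha$. Applying the inequality to an arbitrary convex combination gives $f(\theta x+(1-\theta)y)\leq\max\{f(x),f(y)\}\leq\alpha$, so the combination again lies in $S_{\alpha}$; since $\alpha$ was arbitrary, all sublevel sets are convex and $f$ is quasi-convex.

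There is no genuine obstacle here: the proof is a direct translation between membership in a common sublevel set and a value being bounded by a maximum, and the single non-automatic move is the selection $\alpha=\max\{f(x),f(y)\}$ in the forward direction. I would therefore keep the write-up short, focusing attention entirely on the sublevel-set reformulation rather than on any technical estimate.
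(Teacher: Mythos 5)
Your proof is correct. The paper itself does not prove this theorem --- it is recalled as a known result from convex analysis (with a pointer to the reference on convex optimization) --- so there is no internal proof to compare against; your argument is the standard one, with the forward direction hinging on the choice $\alpha=\max\{f(x),f(y)\}$ and the converse correctly using the hypothesized convexity of $\mathrm{\bf dom}f$ to place the convex combination inside the sublevel set $S_{\alpha}$. Both directions are complete as written.
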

Clearly, the quasi-convexity generalizes the notion of convexity in the sense that the class of quasi-convex functions is larger than and includes the class of convex functions. Also, in most cases, quasi-convex functions inherit  the nice properties of convex functions including the absence of local minimum as stated in the following theorem.
\begin{theorem}
 Let $f$ be a quasi-convex function. Then every local minimum is a global minimum or $f$ is constant in a neighborhood of this local minimum. 
\end{theorem}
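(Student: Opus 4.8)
The plan is to argue by contradiction, using the modified Jensen inequality of Theorem 1 as essentially the only tool. Fix a local minimum $x^*$ of $f$ and suppose it fails to be a global minimum; I will show that this forces $f$ to take its local‑minimum value at points arbitrarily close to $x^*$, which is exactly the second alternative in the statement. Since $x^*$ is a local minimum, there is a neighborhood $B$ of $x^*$, contained in $\mathrm{\bf dom} f$, on which $f(x)\geq f(x^*)$ for every $x\in B$. Since $x^*$ is assumed not to be a global minimum, there exists a point $y\in \mathrm{\bf dom} f$ with $f(y)<f(x^*)$.

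The key step is to probe $f$ along the segment joining $x^*$ to this strictly better point $y$. For $\theta\in[0,1]$ set $z_\theta=(1-\theta)x^*+\theta y$; because $\mathrm{\bf dom} f$ is convex (part of the definition of quasi-convexity) each $z_\theta$ lies in the domain, and for $\theta$ small enough $z_\theta\in B$. Applying Theorem 1 to the pair $(x^*,y)$ gives $f(z_\theta)\leq \max\{f(x^*),f(y)\}=f(x^*)$, while membership in $B$ gives the reverse inequality $f(z_\theta)\geq f(x^*)$. Hence $f(z_\theta)=f(x^*)$ for all sufficiently small $\theta>0$: the function is flat, at its local‑minimum value, along a whole segment issuing from $x^*$. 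In particular $x^*$ cannot be a strict local minimum, and this plateau is precisely the ``constant in a neighborhood'' situation that the theorem permits; equivalently, a \emph{strict} local minimum of a quasi-convex function must be global.

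The step I expect to be the genuine obstacle is the passage from ``constant along the segment toward $y$'' to the literal phrase ``constant in a neighborhood of $x^*$.'' Jensen's inequality controls $f$ only on convex combinations of the two chosen points, so it directly yields flatness along a single direction rather than on a full ball; promoting this to a two-sided statement needs either the one-dimensional sublevel-set description of quasi-convexity, where $\{f\leq f(x^*)\}$ is an interval and the argument closes at once, or an additional continuity hypothesis on $f$. For the asymptotic MSE studied in this paper the function is smooth, so this gap is harmless, but a careful write-up should either invoke continuity or weaken the conclusion to ``$x^*$ is not a strict local minimum,'' since a discontinuous quasi-convex function can exhibit a non-global local minimum whose plateau is only one-sided.
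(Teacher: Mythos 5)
The paper never proves this theorem: it is recalled as a known fact (with a pointer to the convex-optimization reference), and the only thing the paper uses later is the consequence that a quasi-convex function which is non-constant on every interval has all of its local minima global. So your proof stands on its own, and its core is correct and is the standard argument: probing along the segment from the local minimum $x^*$ toward a strictly better point $y$, the Jensen-type inequality of Theorem~1 forces $f\equiv f(x^*)$ on a small segment issuing from $x^*$, i.e.\ every strict local minimum is global, and a non-global local minimum sits on a one-sided plateau. That one-sided version is exactly what the paper needs downstream: if $F$ is non-constant on every interval, no local minimum can carry such a flat segment, hence every local minimum of $F$ is global.

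Where your closing discussion goes astray is in the proposed ways to upgrade the conclusion to the literal phrase ``constant in a neighborhood.'' No such upgrade exists, because the literal statement is false, and neither continuity nor the one-dimensional sublevel-set description rescues it. Consider $f(x)=x+1$ for $x\leq -1$, $f(x)=0$ for $-1\leq x\leq 0$, and $f(x)=x$ for $x\geq 0$. This $f$ is continuous, one-dimensional and quasi-convex (every sublevel set is an interval: $S_\alpha=\left(-\infty,\alpha-1\right]$ for $\alpha<0$ and $S_\alpha=\left(-\infty,\alpha\right]$ for $\alpha\geq 0$); the point $x^*=0$ is a local minimum that is not global (since $f(-2)=-1<0=f(0)$), yet $f$ is not constant on any neighborhood of $0$, because $f(x)=x>0$ immediately to the right. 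So the plateau can be one-sided even for continuous functions on $\mathbb{R}$, contrary to your suggestion that the interval structure of $\{f\leq f(x^*)\}$ or a continuity hypothesis ``closes the argument at once.'' The only correct repair is the one you also offer: weaken the conclusion to ``$f$ is constant on a segment emanating from $x^*$'' (equivalently, ``$x^*$ is not a strict local minimum''). Your segment argument proves precisely that statement, and that statement suffices for every use the paper makes of the theorem.
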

Consequently, if a quasi-convex function $f$ is non constant over any given interval (which is the case for the sum of quadratic functions we consider), then each local minimum is also a global minimum. Moreover, this global minimum (whenever it exists) is unique for real valued functions.
To prove the non existence of local minima besides the global one, we use often the following second-order condition:
\begin{theorem}
 Let $f$ be a real function which is twice derivable. If $f$ satisfies:
$$
\forall c \hspace{0.3cm}\textnormal{such that}\hspace{0.3cm} f'(c)=0, \hspace{0.2cm} f''(c)> 0,
$$
then, $f$ is quasi-convex, and each local minimum is a global minimum. 
\label{th:second}
\end{theorem}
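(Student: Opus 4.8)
The plan is to establish quasi-convexity in two stages: first show that the hypothesis forces $f$ to have at most one stationary point on its domain (which, being a convex subset of $\mathbb{R}$, is an interval), and then show that a twice-differentiable function on an interval with at most one stationary point — that point being a strict local minimum — is necessarily unimodal, which is exactly quasi-convexity.

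First I would record that every root $c$ of $f'$ is a \emph{strict} local minimum of $f$: this is the classical second-derivative test, since $f'(c)=0$ together with $f''(c)>0$. The heart of the argument is then the claim that $f'$ has at most one zero. I would prove this by contradiction. Suppose $f'(c_1)=f'(c_2)=0$ with $c_1<c_2$. Since $f$ is continuous on the compact interval $[c_1,c_2]$, it attains a maximum there. Because $c_1$ and $c_2$ are strict local minima, the values of $f$ just to the right of $c_1$ and just to the left of $c_2$ strictly exceed $f(c_1)$ and $f(c_2)$, so the maximum cannot be attained at either endpoint; it is attained at some interior point $c_3\in(c_1,c_2)$. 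At an interior maximum the first-order condition gives $f'(c_3)=0$ and the second-order necessary condition gives $f''(c_3)\le 0$, contradicting the hypothesis $f''(c_3)>0$. Hence $f'$ vanishes at most once.

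Next I would split into the two remaining cases. If $f'$ never vanishes, then since $f'$ is continuous (as $f$ is twice differentiable) it has constant sign by the intermediate value theorem, so $f$ is strictly monotone; a monotone function on an interval has sublevel sets that are intervals, hence is quasi-convex. If $f'$ vanishes at a single point $c_0$, then $f'$ is continuous, nonzero off $c_0$, and changes from negative to positive across $c_0$ (forced by $c_0$ being a strict local minimum together with constancy of sign on each side); therefore $f$ is non-increasing to the left of $c_0$ and non-decreasing to the right. For such a unimodal function each sublevel set $S_{\alpha}=\{x : f(x)\le \alpha\}$ is an interval containing $c_0$, or empty, hence convex, so $f$ is quasi-convex.

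Finally, having established quasi-convexity, the conclusion about minima follows from the preceding theorem on quasi-convex functions: every local minimum is global unless $f$ is constant in a neighborhood. But local constancy is impossible here, since it would force $f'$ to vanish on an interval and hence $f''=0$ at interior points of that interval — points where $f'=0$ — contradicting the hypothesis. Thus every local minimum is a strict global minimum. I expect the main obstacle to be the ``at most one stationary point'' step, specifically making the interior-maximum argument airtight by ruling out the endpoints and invoking the second-order necessary condition correctly; once that is in hand, the passage from unimodality to convex sublevel sets is routine.
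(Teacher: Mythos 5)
Your proof is correct, but note that the paper itself contains no proof of this theorem: it is quoted as a standard second-order sufficient condition for quasi-convexity (the reader is pointed to \cite{boyd}), and the authors then use it as a black box. Your argument therefore supplies what the paper leaves implicit, and it does so soundly: the second-derivative test turns every stationary point into a strict local minimum; the extreme-value theorem plus the second-order necessary condition $f''(c_3)\le 0$ at an interior maximum rules out two stationary points; continuity of $f'$ (available because $f$ is twice differentiable, so $f'$ is differentiable hence continuous) gives constant sign of $f'$ on each side of the unique zero, hence unimodality; and unimodal or monotone functions on an interval have interval (hence convex) sublevel sets. Two details deserve explicit credit: first, your contradiction step correctly uses only the \emph{necessary} condition $f''\le 0$ at an interior maximum against the hypothesis $f''>0$ at stationary points, which is exactly where the argument could have gone wrong; second, your final appeal to the paper's preceding theorem (local minima of quasi-convex functions are global unless $f$ is locally constant), combined with the observation that local constancy would force $f'=0$ and $f''=0$ simultaneously, handles uniformly the case of local minima at boundary points of the domain, where $f'$ need not vanish and the stationarity hypothesis gives no information. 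The only implicit assumption worth stating is that the domain of $f$ is an interval, but this is already required for quasi-convexity to be meaningful, so it is harmless.
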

Next we state our main result regarding the unimodality of the asymptotic MSE then we prove it in the section after. 

\begin{theorem}
Let $c_i$, $d_i$ be two sequences of $n\in\mathbb{N}^*$ strictly positive reals. Then the derivative of  

\begin{equation}
F_n(x)=\sum_{i=1}^n \frac{1+c_i x^2}{\left(1+d_ix\right)^2}
\label{eq:Fn}
\end{equation}
has a unique positive zero $x_0$ with $F^{(2)}_n(x_0)>0$. Consequently, $F_n(x)$ is a quasi-convex function when its domain of definition is restricted to $\mathbb{R}^+$ and hence has a unique local (global) minimum on the positive real axis. In the sequel, we will omit the index $n$ for notational simplicity so that $F_n$ will be referred to as $F$.
\label{th:unique}
\end{theorem}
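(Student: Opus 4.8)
The plan is to invoke Theorem~\ref{th:second}: if I can show that at \emph{every} positive zero of $F'$ the second derivative is strictly positive, then $F$ is quasi-convex on $\mathbb{R}^+$, every critical point is a strict local minimum, and since two strict local minima of a differentiable function must enclose a local maximum (a critical point with $F''\le 0$, which is excluded), there can be at most one. Existence of at least one zero is the easy part, so the full statement would follow.

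First I would compute the derivatives. A direct calculation gives $F'(x)=2\sum_{i=1}^n \frac{c_i x-d_i}{(1+d_i x)^3}$ and $F''(x)=2\sum_{i=1}^n \frac{c_i+3d_i^2-2c_i d_i x}{(1+d_i x)^4}$. Since $F'(0)=-2\sum_i d_i<0$, while every summand of $F'$ is strictly positive once $x>\max_i(d_i/c_i)$, the intermediate value theorem furnishes a zero $x_0>0$ and confines all zeros to the bounded interval $(0,\max_i(d_i/c_i))$. This disposes of existence and localizes the problem.

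The heart of the argument is to prove $F''(x_0)>0$ whenever $F'(x_0)=0$. I would \emph{not} attempt a termwise bound: writing $\phi_i(x)=\frac{c_i x-d_i}{(1+d_i x)^3}$, each $\phi_i$ is single-humped (negative, then positive with a positive tail decaying to $0$), so the negative contributions to $F''$ come precisely from the indices on the decreasing tail, and one checks that no single multiplier $\lambda$ makes $\phi_i'+\lambda\phi_i\ge 0$ for all admissible $c_i,d_i>0$; hence the first-order condition $\sum_i\phi_i(x_0)=0$ must be used \emph{globally}, not term by term. The clean way to expose the structure is the partial-fraction identity $\phi_i(x)=\frac{c_i/d_i}{(1+d_i x)^2}-\frac{(c_i+d_i^2)/d_i}{(1+d_i x)^3}$ together with the substitution $p_i=1/(1+d_i x_0)\in(0,1)$ and $\alpha_i=c_i/d_i$. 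Under these, the critical-point condition collapses to the weighted-mean identity $\sum_i v_i\alpha_i=\frac{1}{x_0}\sum_i v_i$ with weights $v_i=p_i^2(1-p_i)$, and a short computation recasts $F''(x_0)>0$ as a comparison between two weighted averages of the $p_i$: one with weights $v_i$, the other with weights $\alpha_i v_i$.

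The hard part will be establishing that comparison, i.e. controlling how strongly the weights $\alpha_i$ can correlate with the $p_i$ subject to the single linear constraint above. I expect this to follow from a Chebyshev/correlation-type sum inequality exploiting that $p_i$ is a monotone function of $d_i$ while $\alpha_i$ is independent, which is exactly the point where the global balance enters. As a fallback route — and the one foreshadowed by the discussion of exponential polynomials in the introduction — I would instead clear denominators in $F'$ and bound the number of positive roots of the resulting expression by regarding $\{(1+d_i x)^{-2},(1+d_i x)^{-3}\}_i$ as (a transform of) a Chebyshev/Descartes system, for which the number of sign changes of a linear combination is governed by the signs of its coefficients; a simple-zero count of $F'$ would then yield both uniqueness and, via the $-\!\to\!+$ crossing at $x_0$, the sign $F''(x_0)>0$.
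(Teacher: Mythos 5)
You never actually prove the step on which everything hinges, so the proposal is a plan rather than a proof. The derivative formulas, existence, and localization are fine (they agree with the paper's Lemma~\ref{lemma:Fn}), but the claim that $F''>0$ at every zero of $F'$ is deferred twice: first to an unstated ``Chebyshev/correlation-type'' inequality between two weighted averages, then to a fallback in which the family $\left\{(1+d_ix)^{-2},(1+d_ix)^{-3}\right\}_{i=1}^n$ is asserted to behave like a Descartes system. Neither is established, and the second is not a citable fact: to get a bound of one sign change you would need the ordering listing all the $(1+d_ix)^{-2}$ before all the $(1+d_ix)^{-3}$ to form a Descartes system on $(0,\infty)$, which is essentially as hard as the theorem itself.

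Worse, the gap cannot be closed, because the inequality you need --- and the theorem itself as stated --- is false. Take $n=2$, $c_1=153$, $d_1=9$, $c_2=1/100$, $d_2=2/3$. Then $\frac{1}{2}F'(x)=\frac{153x-9}{(1+9x)^3}+\frac{x/100-2/3}{(1+2x/3)^3}$ equals $-29/3<0$ at $x=0$, about $+0.53$ at $x=0.2$, about $-0.0076$ at $x=2$, and is positive for every $x>200/3$ (both numerators are then positive); the sign pattern $-,+,-,+$ forces at least three positive zeros of $F'$, and at the middle one $F$ has a local maximum, so $F''\leq 0$ there and $F$ is not quasi-convex on $\mathbb{R}^+$. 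Your own reduction detects exactly this failure mode: with weights $v_i=p_i^2(1-p_i)$, the condition $F''(x_0)>0$ becomes $\frac{\sum_i(\alpha_iv_i)p_i}{\sum_i\alpha_iv_i}>\frac{\sum_iv_ip_i}{\sum_iv_i}-\frac{1}{3}$, which is violated when $\alpha_i=c_i/d_i$ concentrates on the index with smallest $p_i$ (here $p_1=0.1$, $p_2=0.6$ at $x_0=1$), and the example above is precisely that configuration. For what it is worth, the paper's own, very different route (higher derivatives, rescaling to $G_k$, Hurwitz) is also unsound: its Theorem~\ref{th:dif} is contradicted by $(x-10)e^{-x/10}+(100x-1)e^{-2x}$, which is of the form \eqref{eq:f} with strictly positive coefficients yet has sign pattern $-,+,-,+$ at $x=0,\,0.5,\,3,\,10$, hence three positive zeros; the error sits in Appendix~\ref{appendix:dif}, where the zero supplied by Rolle's theorem to the right of the last zero drifts to infinity with $m$ (the relevant sign-change threshold of the $(n+1)$-th term grows like $2m$), so the zeros of $g_m^{(2)}$ need not stay in the fixed rectangle on which Hurwitz's theorem is invoked. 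So your instinct about where the difficulty lies was correct, but no completion of your argument (or of the paper's) exists.
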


To prove this theorem, we proceed in the following steps.
\begin{itemize}
\item First, we show that the number of positive real values of $F^{(k+1)}$ is larger or equal than that of $F^{(k)}$, where $F^{(k)}$ denotes the $k$-th derivative of $F$. 
\item We introduce the function $G_{k}$ which has the same number of zeros as $F^{(k)}$ and prove that it converges uniformly to $G_{\infty}$, over a compact set that contains all the zeros of $F^{(k)}$. 
\item Then we prove that  $G_{\infty}$ has a unique positive zero in that compact set.
\item By applying Hurwitz theorem \cite{remmert}, we conclude that for large values of $k$, $G_{k}$ is zero only once and that will be also the case of $F^{(k)}$.
\item Finally, we prove that the second derivative of $F$ is strictly positive when evaluated at the zero argument of $F$.
Fig \ref{fig:F} illustrates the shape of function $F$ and its first and second order derivatives, for $n=3$.
\begin{figure}[htbp]
\begin{center}
 \includegraphics[scale=0.4]{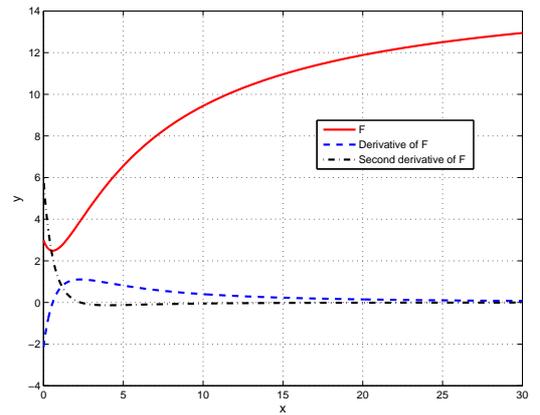}
\end{center}
\caption{Function F and its first and second order derivatives}
\label{fig:F}
\end{figure}
\end{itemize}
Next section provides the  details of all these steps and their proofs.


\section{Analysis and Properties of $F$}
\subsection{Closed-form expressions for the derivatives of $F$}
In this subsection, we provide a closed form expression for the $k$-th derivative of  function $F$. We also show that the number of zeros of the $k$-th derivative is increasing  with $k$.
\begin{lemma}
\label{lemma:Fn}
The $k$-th derivative of $F(x)$ ($k>0$) can be put on the following expression:
\begin{equation}
F^{(k)}(x)=(-1)^{k+1}\sum_{i=1}^n\frac{b_{i,k}x-a_{i,k}}{(1+d_ix)^{k+2}}
\label{eq:F}
\end{equation}
where $a_{i,k}$ and $b_{i,k}$ are  sequences of positive reals given by:
$$
\left\{
\begin{array}{lll}
b_{i,k}&=&2k!c_id_i^{k-1}\\
a_{i,k}&=&2k!c_id_i^{k-1}\left(\frac{k+1}{2}\frac{d_i}{c_i}+\frac{k-1}{2d_i}\right)
\end{array}
\right.
$$
\end{lemma}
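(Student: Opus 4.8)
The plan is to argue by induction on $k$, exploiting the linearity of differentiation to reduce the claim to a single summand. Writing $g_i(x)=\frac{1+c_ix^2}{(1+d_ix)^2}$, it suffices to show that
\[
g_i^{(k)}(x)=(-1)^{k+1}\frac{b_{i,k}x-a_{i,k}}{(1+d_ix)^{k+2}}
\]
with the stated coefficients, and then sum over $i$ to obtain \eqref{eq:F}.

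For the base case $k=1$, I would simply differentiate $g_i$ with the quotient rule. Expanding the numerator, the terms of degree two in $x$ cancel and one is left with
\[
g_i'(x)=\frac{2c_ix-2d_i}{(1+d_ix)^3},
\]
which matches the formula with $b_{i,1}=2c_i$ and $a_{i,1}=2d_i$, exactly the values produced by $b_{i,k}=2k!c_id_i^{k-1}$ and $a_{i,k}=2k!c_id_i^{k-1}\bigl(\tfrac{k+1}{2}\tfrac{d_i}{c_i}+\tfrac{k-1}{2d_i}\bigr)$ at $k=1$.

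For the inductive step, assuming the formula at level $k$, I would differentiate the single summand once more. After cancelling a common factor $(1+d_ix)^{k+1}$ from numerator and denominator, collecting the coefficient of $x$ and the constant term yields the recurrences
\[
b_{i,k+1}=(k+1)d_i\,b_{i,k},\qquad a_{i,k+1}=b_{i,k}+(k+2)d_i\,a_{i,k},
\]
while the sign $(-1)^{k+1}$ is promoted to $(-1)^{k+2}$, as required. It then remains to verify that the claimed closed forms satisfy these two recurrences. The relation for $b$ is immediate; for $a$, substituting the closed forms reduces the check to the elementary simplification $(k+2)(k-1)+2=k(k+1)$, which delivers the target value $a_{i,k+1}=(k+2)!\,d_i^{k+1}+k\,(k+1)!\,c_id_i^{k-1}$.

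Finally, the positivity of $a_{i,k}$ and $b_{i,k}$ is read off directly from the closed forms: since $c_i,d_i>0$ and $k\ge1$, we have $\tfrac{k+1}{2}\tfrac{d_i}{c_i}>0$ and $\tfrac{k-1}{2d_i}\ge0$, so both coefficients are strictly positive. I expect the only delicate point in the whole argument to be the bookkeeping in the inductive step, namely keeping the sign and the exponent change $k+2\mapsto k+3$ straight and performing the cancellation cleanly; this is routine quotient-rule algebra rather than a genuine obstacle, the single nontrivial simplification being the identity $(k+2)(k-1)+2=k(k+1)$.
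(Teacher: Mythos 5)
Your proposal is correct and follows essentially the same route as the paper's proof: induction on $k$ with base case $k=1$ via the quotient rule, the inductive step producing exactly the recurrences $b_{i,k+1}=(k+1)d_i b_{i,k}$ and $a_{i,k+1}=b_{i,k}+(k+2)d_i a_{i,k}$, and the closed forms verified against them (your identity $(k+2)(k-1)+2=k(k+1)$ is precisely the simplification hidden in the paper's factored computation). The only cosmetic difference is that you reduce to a single summand by linearity while the paper carries the sum along; this changes nothing of substance.
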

\begin{proof}
 See Appendix \ref{appendix:Fn}.
\end{proof}
Given the previous expressions of $F^{(k)}$, we are able to prove our first step result concerning the increasing number of zeros of $F^{(k)}$. We have the following lemma:
\begin{lemma}
Let $Z_k$ denote the number of zeros of the $k$-th derivative of $F$ given by  (\ref{eq:F}). Then $Z_{k+1}\geq Z_k$.
\label{lemma:Zk}
\end{lemma}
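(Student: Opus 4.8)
The plan is to compare the zeros of $F^{(k)}$ and of $F^{(k+1)}=\left(F^{(k)}\right)'$ by Rolle's theorem, the point being that a naive application of Rolle only yields $Z_{k+1}\geq Z_k-1$; the extra zero needed to reach $Z_{k+1}\geq Z_k$ must be supplied by the behaviour of $F^{(k)}$ at infinity.

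First I would record three elementary facts about $g:=F^{(k)}$ read off from the closed form \eqref{eq:F} of Lemma \ref{lemma:Fn}. Since $d_i>0$, every denominator $(1+d_ix)^{k+2}$ is positive and smooth on $\mathbb{R}^+$, so $g$ is infinitely differentiable there and, being a rational function, has only finitely many zeros on $(0,\infty)$. Evaluating at $x=0$ gives $g(0)=(-1)^k\sum_{i=1}^n a_{i,k}\neq 0$ because the $a_{i,k}$ are strictly positive, so $0$ is never a zero. Finally, each summand in \eqref{eq:F} has a numerator of degree one and a denominator of degree $k+2\geq 3$, whence $\lim_{x\to\infty}g(x)=0$.

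Next I would list the positive zeros of $g$ as $0<x_1<\cdots<x_m$ with $m=Z_k$ (if $m=0$ there is nothing to prove). On each bounded gap $(x_j,x_{j+1})$, $j=1,\dots,m-1$, Rolle's theorem produces a point where $g'=F^{(k+1)}$ vanishes, giving $m-1$ distinct zeros. The remaining, and crucial, zero comes from the unbounded interval $(x_m,\infty)$: there $g$ has no zero, hence keeps a constant sign, while $g(x_m)=0$ and $g(x)\to 0$ as $x\to\infty$. Consequently $g$ attains a strict extremum at some interior $\xi\in(x_m,\infty)$ at which $F^{(k+1)}(\xi)=0$; this is exactly the limit version of Rolle's theorem applied on $[x_m,\infty]$ with the common value $0$ at both ends. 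Collecting the $m-1$ interior points together with $\xi$, all lying in pairwise disjoint intervals and therefore distinct, yields $Z_{k+1}\geq m=Z_k$.

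The only delicate point is this last step: one must guarantee that the extremum on the half-line $(x_m,\infty)$ is genuinely attained at a finite interior argument rather than merely approached as $x\to\infty$. This is settled by fixing some $x_0>x_m$ with $g(x_0)\neq 0$ and using $g\to 0$ at infinity to confine a maximum (or minimum) of $|g|$ to a compact subinterval of $(x_m,\infty)$, where the derivative must vanish. Thus the heart of the proof is simply recognizing that the vanishing of $F^{(k)}$ at infinity plays the role of a second endpoint zero, upgrading the naive Rolle count by exactly one.
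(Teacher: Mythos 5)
Your proof is correct and follows essentially the same route as the paper's: Rolle's theorem on the gaps between consecutive zeros of $F^{(k)}$ supplies $Z_k-1$ zeros of $F^{(k+1)}$, and the vanishing of $F^{(k)}$ at infinity acts as a second endpoint zero on $\left[x_{Z_k},\infty\right[$, yielding the one additional zero. In fact you justify the last step (that the extremum on the unbounded interval is attained at a finite point) more carefully than the paper does.
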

\begin{proof}
Let $x_1,\cdots,x_{\scriptscriptstyle{Z_k}}$ denote the zeros of the $k$-th derivative $F^{(k)}$ in $\left[0,\infty\right[$. Therefore, using Rolle's Theorem \cite{Clapham}, $F^{(k+1)}$ has at least $Z_{k}-1$ zeros $y_1,\cdots,y_{\scriptscriptstyle{Z_k-1}}$ where $x_i\leq y_{i}\leq x_{i+1}, i\in \left\{1,\cdots,Z_{k}-1\right\}$.
Since $\lim_{x\rightarrow+\infty}F^{(k)}(x)=0$, there exist at least one zero of $F^{(k+1)}$ in $\left[x_{\scriptscriptstyle{Z_k}},\infty\right[$. Consequently, the number of zeros of $F^{(k+1)}$ is at least equal to $Z^{(k)}$, i.e $Z_{k+1}\geq Z_k$.
\end{proof}
\subsection{Uniform equivalence of $G_{k}$}
In this subsection, we introduce an alternative function $G_k$ that has the same number of positive valued zeros as $F^{(k)}$ and we provide its asymptotic equivalent expression. For that, let us start by providing a useful approximation of coefficient $a_{i,k}$ that will be used later to build the function $G_k$.

The Stirling formulae \cite{Stirling} provides us an equivalent \footnote{Equivalence here means that $\frac{k!}{\sqrt{2\pi k}\left(\frac{k}{e}\right)^k}\xrightarrow[k\to\infty]{}1$} for $k!$:
$$
k!\sim \sqrt{2\pi k}\left(\frac{k}{e}\right)^k
$$
we can easily show that:
\begin{align}
a_{i,k}&\sim \sqrt{2\pi} k^{k+\frac{3}{2}}e^{-k}c_id_i^{k-1}\left(\frac{d_i}{c_i}+\frac{1}{d_i}\right)\nonumber\\
&\sim\sqrt{2\pi} k^{k+\frac{3}{2}}e^{-k}c_id_i^{k+2}\left(\frac{1}{c_id_i^2}+\frac{1}{d_i^4}\right)\label{eq:equivalence}.\\
\nonumber
\end{align}

We recall that the overall quasi-convexity proof is based on  studying  the zeros of the function $F^{(k)}$ as $k$ goes to infinity. Actually, one can show\footnote{Outside this interval, all the terms in the sum given by (\ref{eq:F}) have the same sign and hence $F^{(k)}$ cannot be zero.} that these zeros belong to  the interval $\left[V_{\mathrm{min}}^k,V_{\mathrm{max}}^k\right]$, where $V_{\mathrm{min}}^k=\displaystyle\min_{i \in\{1,\cdots n\}}\frac{a_{i,k}}{b_{i,k}}$ and $V_{\mathrm{max}}^k=\displaystyle\max_{i \in\{1,\cdots n\}}\frac{a_{i,k}}{b_{i,k}}$. A lower bound for $V_{\mathrm{min}}^k$ and an upper bound for  $V_{\mathrm{max}}^k$ can be easily computed and are given by:
\begin{align}
V_{\mathrm{min}}^k&\geq k\tau_{\mathrm{min}}\\
V_{\mathrm{max}}^k&\leq k\tau_{\mathrm{max}}  
\end{align}
where $\displaystyle \tau_{\mathrm{min}}=\min_{i\in \{1,\cdots n\}}\frac{d_i}{2c_i}$ and $\tau_{\mathrm{max}}=\displaystyle\max_{i \in \{1,\cdots n\}}\frac{d_i}{c_i}+\frac{1}{2d_i}$.
The difficulty that we face is that the zeros of $F^{(k)}$ are of order $k$, thus making the analysis of the asymptotic behavior of  function $F^{(k)}$ somehow delicate. 
To deal with this difficulty,  another function, which we denote by $G_{k}$, and which brings back those zeros to a given fixed interval is introduced.  This function will be studied over the interval of interest $\left[\tau_\mathrm{min},\tau_{\mathrm{max}}\right]$.
 
 Function $G_k$ is defined as:
\begin{equation}
G_{k}(x)=(-1)^{k+1}\sqrt{\frac{k}{2\pi}}e^{k}x^{k+2}F^{(k)}(kx)
\label{eq:function_gk}
\end{equation}

One can easily note that over $\left[\tau_\mathrm{min},\tau_{\mathrm{max}}\right]$, $G_{k}(x)$ has the same number of zeros as $F^{(k)}$. 

Clearly, the scaling of the variable $x$ by factor $k$ is introduced to bring back the roots of $F^{(k)}$ from the interval $\left[k\tau_{\mathrm{min}},k\tau_{\mathrm{max}}\right]$ to the finite length interval $\left[\tau_{\mathrm{min}},\tau_{\mathrm{max}}\right]$. The multiplicative function in \eqref{eq:function_gk} (i.e $\sqrt{\frac{k}{2\pi}}e^{k}x^{k+2}$) is introduced to normalize the coefficient $a_{i,k}$ and $b_{i,k}$ and to approximate the denominator terms in \eqref{eq:F} by exponential functions. 

Substituting $F^{(k)}$ by its expression in (\ref{eq:F}), $G_{k}$ writes as:

\begin{align*}
G_{k}(x)&=\sqrt{\frac{k}{2\pi}}e^kx^{k+2}\sum_{i=1}^n\frac{kb_{i,k}x-a_{i,k}}{\left(1+kd_ix\right)^{k+2}}\\
&=\sqrt{\frac{k}{2\pi}}e^k\sum_{i=1}^n\frac{a_{i,k}\left(\frac{kb_{i,k}}{a_{i,k}}x-1\right)}{k^{k+2}d_i^{k+2}\left(\frac{1}{kd_ix}+1\right)^{k+2}}\\
&\triangleq\sum_{i=1}^{n}\frac{g_{i,k}(x)}{h_{i,k}(x)}
\end{align*}

where $g_{i,k}(x)\triangleq \frac{\sqrt{k}e^k}{\sqrt{2\pi}d_i^{k+2}k^{k+2}}a_{i,k}\left(\frac{kb_{i,k}}{a_{i,k}}x-1\right)$ and $h_{i,k}(x)=(\frac{1}{kd_ix}+1)^{k+2}$.

In the following, we extend the domain of the function $G_{k}$ to the rectangle $R$ of $\mathbb{C}$ given by:
$$\mathcal{R}_\epsilon=\left\{z=x+iy, x\in \left[\tau_\mathrm{min},\tau_{\mathrm{max}}\right], -\epsilon\leq y\leq \epsilon \right\}$$ where $\epsilon$ is a constant real that will be specified later. Over this domain, the asymptotic equivalent of $G_k$ is given by the following theorem:
From the previously stated lemma, one can prove easily the following result:

\begin{theorem}
In the rectangle $\mathcal{R}_{\epsilon}$,  $G_{k}$ converges uniformly to $G_{\infty}$ given by:
$$
G_{\infty}(z)=\sum_{i=1}^nc_i\left(\frac{1}{c_id_i^2}+\frac{1}{d_i^4}\right)\left(V_{\infty,i}z-1\right)e^{-\frac{1}{d_iz}}
$$
where $V_{\infty,i}=\frac{2}{\frac{d_i}{c_i}+\frac{1}{d_i}}$.
\label{th:convergenceG}
\end{theorem}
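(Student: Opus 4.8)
The plan is to prove the convergence termwise and uniformly, exploiting that the sum defining $G_{k}$ has only $n$ summands: since a finite sum of uniformly convergent sequences converges uniformly, it suffices to show that each $g_{i,k}(z)/h_{i,k}(z)$ converges uniformly on $\mathcal{R}_{\epsilon}$ to $c_i\left(\frac{1}{c_id_i^2}+\frac{1}{d_i^4}\right)(V_{\infty,i}z-1)e^{-1/(d_iz)}$. First I would factor each summand into three pieces that can be handled independently: a $z$-independent prefactor $P_{i,k}=\frac{\sqrt{k}e^k}{\sqrt{2\pi}d_i^{k+2}k^{k+2}}a_{i,k}$, an affine factor $L_{i,k}(z)=\frac{kb_{i,k}}{a_{i,k}}z-1$, and the reciprocal denominator $h_{i,k}(z)^{-1}=\left(1+\frac{1}{kd_iz}\right)^{-(k+2)}$, then pass to the limit in each and recombine.

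I would dispose of the two elementary factors first. For the prefactor, substituting the Stirling estimate \eqref{eq:equivalence} for $a_{i,k}$ and cancelling the powers of $k$ (note $\sqrt{k}\,k^{k+3/2}=k^{k+2}$) and the factors $\sqrt{2\pi}$, $e^{\pm k}$, $d_i^{k+2}$ gives $P_{i,k}\to c_i\left(\frac{1}{c_id_i^2}+\frac{1}{d_i^4}\right)$; since this is a purely numerical limit, it is trivially uniform in $z$. For the affine factor, Lemma~\ref{lemma:Fn} yields $\frac{kb_{i,k}}{a_{i,k}}=\frac{k}{\frac{k+1}{2}\frac{d_i}{c_i}+\frac{k-1}{2d_i}}\to V_{\infty,i}$, and because $z$ ranges over the bounded set $\mathcal{R}_{\epsilon}$ the convergence $L_{i,k}(z)\to V_{\infty,i}z-1$ is uniform there, with both factors staying uniformly bounded.

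The only real work is the denominator, which I would write as $h_{i,k}(z)^{-1}=\exp\!\left(-(k+2)\log\!\left(1+\frac{1}{kd_iz}\right)\right)$. Since $\mathrm{Re}(z)\ge\tau_{\mathrm{min}}>0$ on $\mathcal{R}_{\epsilon}$ we have $|z|\ge\tau_{\mathrm{min}}$, so $\left|\frac{1}{kd_iz}\right|\le\frac{1}{kd_i\tau_{\mathrm{min}}}\to0$ uniformly; hence for $k$ large $1+\frac{1}{kd_iz}$ lies in a small disc about $1$, well inside the domain of the principal logarithm, which may then be used throughout. A uniform Taylor bound $\log(1+w)=w+O(|w|^2)$ on that disc gives $(k+2)\log\!\left(1+\frac{1}{kd_iz}\right)=\frac{1}{d_iz}+O(1/k)$ uniformly on $\mathcal{R}_{\epsilon}$, whence $h_{i,k}(z)^{-1}\to e^{-1/(d_iz)}$ uniformly and remains uniformly bounded. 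The product of three uniformly convergent, uniformly bounded sequences converges uniformly to the product of the limits, and summing over $i=1,\dots,n$ yields the stated $G_{\infty}$. The delicate point I expect to be the main obstacle is exactly this denominator step: keeping $z$ away from the singularity of $1/(d_iz)$ and inside the analyticity domain of the complex power. This is guaranteed precisely by $\mathrm{Re}(z)\ge\tau_{\mathrm{min}}>0$, which holds for every $\epsilon$, so a small $\epsilon$ is chosen only to leave room for the subsequent Hurwitz argument rather than for this convergence.
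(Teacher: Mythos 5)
Your proposal is correct, and it shares the paper's overall strategy (termwise treatment of the finite sum over $i$, Stirling's estimate \eqref{eq:equivalence} for the constants, an exponential limit for the denominator), but the key technical step is handled by a genuinely different route. The paper keeps each summand as a quotient $g_{i,k}/h_{i,k}$: it obtains the uniform convergence $h_{i,k}(z)=\left(1+\frac{1}{kd_iz}\right)^{k+2}\to e^{1/(d_iz)}$ by citing the classical uniform limit $(1+z/n)^n\to e^z$ on compact sets, and then invokes a dedicated quotient lemma (Lemma \ref{lemma:convuni}): if $f_k\to f_\infty$ and $g_k\to g_\infty$ uniformly on a compact, with $|g_k|>M$ uniformly in $k$ and $z$, then $f_k/g_k\to f_\infty/g_\infty$ uniformly; the required lower bound there is $|h_{i,k}(z)|>1$, valid because $\mathrm{Re}\left(\frac{1}{kd_iz}\right)>0$ on $\mathcal{R}_\epsilon$. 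You instead invert the denominator and treat each summand as a product of three factors (constant prefactor, affine factor, reciprocal denominator), establishing $h_{i,k}(z)^{-1}\to e^{-1/(d_iz)}$ directly through a uniform Taylor bound on $(k+2)\log\left(1+\frac{1}{kd_iz}\right)$, and then apply the product rule for uniformly convergent, uniformly bounded sequences. Both arguments ultimately rest on the same geometric fact that $\mathcal{R}_\epsilon$ keeps $z$ away from the singularity at the origin — you use it as $|z|\geq\tau_{\mathrm{min}}$ inside the Taylor bound, the paper uses it as $\mathrm{Re}(z)>0$ to get $|h_{i,k}|>1$. What your route buys is self-containedness and an explicit $O(1/k)$ error rate; what the paper's route buys is modularity, since the quotient lemma and the cited exponential limit are standalone reusable results and no discussion of the complex logarithm (which is harmless here anyway, the exponent $k+2$ being an integer) is needed.
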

\begin{proof}
See Appendix \ref{appendix:convergenceG}.
\end{proof}
\subsection{Zeros of the uniform limit of $G_k$}
In this section, we prove that $G_{\infty}$ has a unique positive real zero. 
This is a byproduct of the following theorem:
\begin{theorem}
Let $a_i$ $b_i$ and $\alpha_i$ three sequences of $n$ strictly positive real scalars. Let $f$ be the function given by:
\begin{equation}
f(x)=\sum_{i=1}^n(a_ix-b_i)e^{-\alpha_ix}
\label{eq:f}
\end{equation}
Then $f$ admits a unique real positive zero.
\label{th:dif}
\end{theorem}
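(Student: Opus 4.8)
The plan is to establish existence by a sign count at the two ends of $\mathbb{R}^+$ and uniqueness by an induction on $n$. For existence, observe that $f(0)=-\sum_{i=1}^n b_i<0$, whereas as $x\to+\infty$ every summand $(a_ix-b_i)e^{-\alpha_ix}$ tends to $0$ and is ultimately positive, so $f(x)\to 0^+$ and $f>0$ for all large $x$; continuity and the intermediate value theorem then yield at least one positive zero. The entire difficulty is to prove there is no second one.

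The first ingredient is that differentiation preserves the form \eqref{eq:f}: a direct computation gives $-f'(x)=\sum_{i=1}^n(\tilde a_ix-\tilde b_i)e^{-\alpha_ix}$ with $\tilde a_i=\alpha_ia_i>0$ and $\tilde b_i=a_i+\alpha_ib_i>0$, so $-f'$ again belongs to the family \eqref{eq:f} with the same frequencies. Since in addition $f'(0)>0$ and $f'(x)\to 0^-$, the assertion ``every $m$-term function of the form \eqref{eq:f} has a single positive zero'' forces $f'$ to change sign exactly once, hence $f$ to be unimodal (strictly increasing then strictly decreasing) on $\mathbb{R}^+$; combined with $f(0)<0$ and $f\to 0^+$ this isolates one zero. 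Thus uniqueness at order $m$ and unimodality at order $m$ are equivalent, and I would carry both through the induction together, the base case $n=1$ being immediate with zero $b_1/a_1$.

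The inductive step uses a reduction of the order. Ordering $\alpha_1<\dots<\alpha_n$ and multiplying by $e^{\alpha_1x}$ (which does not move the zeros) gives
\begin{equation*}
\hat f(x)=(a_1x-b_1)+\sum_{i=2}^n(a_ix-b_i)e^{-\beta_ix},\qquad \beta_i=\alpha_i-\alpha_1>0 ,
\end{equation*}
and differentiation removes the growth of the first term, leaving $\hat f'(x)=a_1-h(x)$ with $h(x)=\sum_{i=2}^n\bigl(\beta_ia_ix-(a_i+\beta_ib_i)\bigr)e^{-\beta_ix}$ an $(n-1)$-term function of the form \eqref{eq:f}. By the induction hypothesis $h$ has a unique positive zero and, by the equivalence above, is unimodal; hence $\hat f'=a_1-h$ is positive at $x=0$, attains a single minimum value $a_1-\max_{x\ge0}h(x)$, and tends to $a_1>0$ at infinity. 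When $\max_{x\ge0}h(x)\le a_1$ one has $\hat f'\ge 0$, so $\hat f$ is nondecreasing; since it runs from $\hat f(0)=f(0)<0$ to $+\infty$, it has exactly one zero and we are done.

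The step I expect to be the main obstacle is the complementary case $\max_{x\ge0}h(x)>a_1$. There $h(x)=a_1$ has two roots $p<q$, lying on the rising and falling branches of $h$, so $\hat f$ increases on $[0,p]$, decreases on $[p,q]$ and increases on $[q,\infty)$; a priori this permits three zeros, and excluding the extra pair requires showing that the interior local minimum $\hat f(q)$ cannot be negative while the local maximum $\hat f(p)$ is positive. Such a bound does not follow from positivity of the coefficients alone---the general theory of real exponential polynomials only limits the number of real zeros of $\sum_i(a_ix-b_i)e^{-\alpha_ix}$ to $2n-1$---so the core of the argument must be a quantitative estimate of $\hat f$ at the critical points $p,q$ that exploits the specific relations between $a_i$, $b_i$ and $\alpha_i$. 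Equivalently, rewriting the zero condition as the fixed-point equation $x=\sum_i w_i(x)\,b_i/a_i$ with weights $w_i(x)\propto a_ie^{-\alpha_ix}$, the crux reduces to proving that the right-hand side has slope strictly below $1$ at every fixed point, which is the inequality I would expect to do the decisive work.
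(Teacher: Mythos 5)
Your existence argument is fine, and your key structural observation is correct: $-f'$ is again of the form \eqref{eq:f} with coefficients $\tilde a_i=\alpha_i a_i$ and $\tilde b_i=a_i+\alpha_i b_i$, so uniqueness of the zero at a given order implies unimodality at that order, and invoking this only for the $(n-1)$-term function $h$ keeps the induction non-circular. The problem is that your inductive step is not a proof: the case $\max_{x\ge 0}h(x)>a_1$, which you yourself flag as ``the main obstacle,'' is precisely the entire difficulty of the theorem, and you leave it unresolved. After multiplying by $e^{\alpha_1 x}$ and differentiating once, you obtain $\hat f'=a_1-h$, which is \emph{not} of the form \eqref{eq:f} because of the constant term $a_1$; hence the induction hypothesis cannot be applied to it, and unimodality of $h$ alone genuinely permits the increase/decrease/increase profile with three zeros that you describe. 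Your closing suggestion---show that $T(x)=\sum_i w_i(x)b_i/a_i$, with $w_i(x)\propto a_ie^{-\alpha_i x}$, has slope strictly below $1$ at every fixed point---is not proved either, and it is not a routine estimate: writing $\beta_i=b_i/a_i$, a computation gives $T'(x)=-\mathrm{Cov}_w(\alpha,\beta)$, the covariance being taken under the weights $w_i(x)$, and positivity of the coefficients gives no a priori bound keeping this quantity below $1$; this local inequality is essentially a strengthening of the uniqueness statement itself, so nothing has been reduced.

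For comparison, the paper resolves exactly this obstruction by a different mechanism. Arguing by contradiction, it multiplies $f$ by the \emph{decaying} exponential $e^{(\alpha_{n+1}-\frac{1}{m})x}$ (with $\alpha_{n+1}$ the smallest frequency and $\frac{1}{m}$ a vanishing perturbation), so that the product $g_m$ tends to $0$ at infinity and Rolle's theorem converts the assumed extra zeros of $f$ into at least two zeros of the second derivative $g^{(2)}_m$. Differentiating \emph{twice}, rather than once, annihilates in the limit $m\to\infty$ the contribution of the lowest-frequency term: $g^{(2)}_m$ converges uniformly on a suitable rectangle to an $n$-term function of the form \eqref{eq:f}, which by the induction hypothesis has a single zero, and Hurwitz's theorem transfers this count back to $g^{(2)}_m$ for large $m$, yielding the contradiction. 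The two devices your argument lacks---the second differentiation after damping by the smallest exponential, and the Hurwitz limiting argument that restores the inductive form---are exactly what carry the paper past the point where your proposal stops.
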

\begin{proof}
See Appendix \ref{appendix:dif}.
\end{proof}
By defining $f(z)=zG_{\infty}(\frac{1}{z})$ and applying Theorem \ref{th:dif}, we conclude that $G_{\infty}(z)$ has a unique real positive zero. 
\subsection{Application of Hurwitz theorem}
To prove that from a certain range of $k$ $G_k$ is zero only once at the real positive axis, we will rely on the following known result in complex analysis, \cite{remmert}:
\begin{theorem}
Let  $f_k(z)$ be a sequence of analytic functions in a compact ${\bf C}$. Assume that $f_k$ converges uniformly to $f$ in ${\bf C}$. Assume also that $f$ has no zeros on the frontier $\partial{\bf C}$ of ${\bf C}$. Then, there exists $k_0\in\mathbb{N}$ such that $\forall k\geq k_0$, $f$ and $f_k$ have the same number of zeros in ${\bf C}$.
\end{theorem}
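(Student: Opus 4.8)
The plan is to deduce this classical Hurwitz-type statement from Rouch\'e's theorem, exploiting that the boundary $\partial{\bf C}$ is compact and that the limit $f$ does not vanish there. First I would note that the uniform limit of analytic functions is again analytic (by Morera's theorem, say), hence $f$ is in particular continuous on the compact set $\partial{\bf C}$. Therefore the quantity
$$
m := \min_{z\in\partial{\bf C}}|f(z)|
$$
is attained, and by the hypothesis that $f$ has no zeros on $\partial{\bf C}$ it is strictly positive.

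Next I would bring in the uniform convergence. Since $\partial{\bf C}\subset{\bf C}$ and $f_k\to f$ uniformly on ${\bf C}$, there exists $k_0\in\mathbb{N}$ such that for every $k\geq k_0$ one has $\sup_{z\in\partial{\bf C}}|f_k(z)-f(z)|<m$. In particular $|f_k(z)|\geq|f(z)|-|f_k(z)-f(z)|>m-m=0$ for $z\in\partial{\bf C}$, so that for $k\geq k_0$ neither $f$ nor $f_k$ vanishes on the frontier. This guarantees that counting zeros ``in ${\bf C}$'' is unambiguous: for both functions it coincides with the number of interior zeros counted with multiplicity.

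The core step is then a single application of Rouch\'e's theorem on the simple closed contour $\partial{\bf C}$. For $k\geq k_0$ the strict inequality
$$
|f_k(z)-f(z)|<m\leq|f(z)|,\qquad z\in\partial{\bf C},
$$
holds, and both $f$ and $f_k$ are analytic inside and on $\partial{\bf C}$; Rouch\'e's theorem then ensures that $f$ and $f_k=f+(f_k-f)$ have the same number of zeros, counted with multiplicity, enclosed by $\partial{\bf C}$. This is precisely the asserted equality of zero counts for all $k\geq k_0$.

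Since the result is standard, I do not expect a deep obstacle; the only points demanding care are (i) confirming that the uniform limit $f$ is itself analytic so that Rouch\'e applies to it, and (ii) the strict positivity of $m$, which is exactly where the no-zeros-on-$\partial{\bf C}$ hypothesis is indispensable. Without it, the strict boundary inequality $|f_k-f|<|f|$ could fail and a boundary zero of $f$ could migrate across $\partial{\bf C}$ in the limit, breaking the conclusion. Should one prefer to count zeros directly, an equivalent route is the argument principle: writing $N_g=\frac{1}{2\pi i}\oint_{\partial{\bf C}}\frac{g'(z)}{g(z)}\,dz$, one uses that $f_k'\to f'$ locally uniformly (Weierstrass' theorem) together with $|f|\geq m>0$ on $\partial{\bf C}$ to obtain $\frac{f_k'}{f_k}\to\frac{f'}{f}$ uniformly there, whence $N_{f_k}\to N_f$; both being integers, they must coincide for large $k$.
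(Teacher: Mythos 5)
Your proposal is correct, but there is nothing in the paper to compare it against: the authors state this result as a known theorem of complex analysis (Hurwitz's theorem), cite it to the literature, and give no proof before applying it to the rectangles $\mathcal{R}_{\epsilon}$ and $R_{\epsilon}$. Your Rouch\'e argument is the standard textbook derivation and is sound: analyticity of the uniform limit $f$ via Morera/Weierstrass, strict positivity of $m=\min_{z\in\partial{\bf C}}|f(z)|$ by compactness and the no-boundary-zeros hypothesis, then $|f_k-f|<m\leq |f|$ on $\partial{\bf C}$ for $k\geq k_0$, so $f$ and $f_k$ enclose the same number of zeros; your alternative via the argument principle (integer-valued winding numbers converging, hence eventually equal) is equally valid. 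Two points of care, both of which your write-up handles implicitly but which are worth making explicit since the paper's statement is loose: first, ``analytic in a compact ${\bf C}$'' must be read as analytic on a neighborhood of ${\bf C}$ (or on the interior with continuity up to the boundary), and $\partial{\bf C}$ must be a simple closed contour for Rouch\'e to apply verbatim --- harmless here, since the paper only ever invokes the theorem for closed rectangles; second, the zero counts are with multiplicity, which is also the reading the paper needs (a unique simple zero of $G_{\infty}$ forces exactly one zero of $G_k$ for large $k$). In short, your proof correctly supplies the argument the paper delegates to its reference.
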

Applying this theorem, we can deduce that, $G_k$ will have a unique zero value in $\mathcal{R}_{\epsilon}$ as $G_{\infty}$, where $\epsilon$ is chosen so that $G_{\infty}$ has no zeros on the frontier of $\mathcal{R}_{\epsilon}$ and has no complex zeros besides its real positive zero. Since the number of zeros of $G_k$ is increasing with respect to $k$, we conclude that all $G_k$ and hence all $F^{(k)}$ have only a unique positive zero.

Let $x_z$ be the unique positive zero argument of $F^{(1)}$. Since $F^{(1)}$ is negative in a neighborhood of zero, and  $F^{(1)}$ has no zeros for $x\leq x_z$, $F^{(1)}$ is negative in the interval $\left[0, x_z\right]$. Therefore the function $F$ is decreasing in $\left[0, x_z\right]$. 

Since $F^{(1)}$ is positive for large value of $x$, $F^{(1)}$ must change its sign at $x_z$, and hence it is positive in the interval $\left[x_z,\infty\right[$. Consequently, $F$ is increasing in $\left[x_z,\infty\right[$. 

To sum up, we have established that in $\left[0, x_z\right]$, $F$ is decreasing and in $\left[x_z,\infty\right[$ $F$ is increasing. This guarantees that $x_z$ is a minimum for $F$ and hence $F^{(2)}(x_z)\geq0$. In fact, $F^{(2)}(x_z)$ is strictly positive, since $F^{(1)}(x_z)=0$ and $\displaystyle\lim_{x\to\infty}F^{(1)}(x)=0$ means that there exists $y_z\in\left]x_z,\infty\right[$ such that $F^{(2)}(y_z)=0$ and hence $F^{(2)}(x_z)\neq0$ (because $F^{(2)}$ has a unique zero).
%
%
 \section{Conclusion}
 In this paper, we have provided a rigorous proof for the quasi-convexity of the asymptotic MSE  of the regularized semi-blind channel estimate.

More generally, we have proved that any function given by a finite sum of quadratic fractions $\frac{1+cx^2}{\left(1+dx\right)^2}, c, d >0$ is a unimodal function over $\mathbb{R}^{+}$.

For our considered channel estimation problem, the previous result guarantees the absence of non-desired local minima of the MSE function when optimized with respect to the weighting coefficient.
\appendices
\section{Proof of lemma \ref{lemma:Fn}}
\label{appendix:Fn}
\begin{proof}
Lemma \ref{lemma:Fn} can be proved easily by induction on $k$. For $k=1$, we have:

\begin{align*}
F^{(1)}(x)&=\sum_{i=1}^n\frac{2c_ix(1+d_ix)^2-2d_i(1+c_ix^2)(1+d_ix)}{(1+d_ix)^4}\\
&=\sum_{i=1}^n\frac{(1+d_ix)(2c_ix(1+d_ix)-2d_i(1+c_ix^2))}{(1+d_ix)^4}\\
&=\sum_{i=1}^n2\frac{c_ix-d_i}{(1+d_ix)^3}
\end{align*}
Let  $k \in \mathbb{N}^*$. Assume that the result is true until order $k$. Hence, $F_n^{(k)}$ can be written as:
$$
F^{(k)}(x)=(-1)^{k+1}\sum_{i=1}^n\frac{b_{i,k}x-a_{i,k}}{(1+d_ix)^{k+2}}
$$
Therefore,
\begin{align*}
F^{(k+1)}(x)&=(-1)^{k+1}\left(\sum_{i=1}^n\frac{b_{i,k}(1+d_ix)^{k+2}}{(1+d_ix)^{2k+4}}\right.\\
&\left.-\frac{(k+2)d_{i}(1+d_ix)^{k+1}(b_{i,k}x-a_{i,k})}{(1+d_ix)^{2k+4}}\right)\\
\!\!&\!\!\!\!=(-1)^{k+1}\sum_{i=1}^n\frac{b_{i,k}+(k+2)d_ia_{i,k}-(k+1)d_ib_{i,k}x}{(1+d_ix)^{k+3}}\\
&=(-1)^{k+2}\sum_{i=1}^n\frac{(k+1)b_{i,k}d_ix-(b_{i,k}+(k+2)d_ia_{i,k})}{(1+d_ix)^{k+3}}\\
&=(-1)^{k+2}\sum_{i=1}^n\frac{b_{i,k+1}x-a_{i,k+1}}{(1+d_ix)^{k+3}}
\end{align*}
where $b_{i,k+1}=(k+1)b_{i,k}d_i$ and $a_{i,k+1}=(b_{i,k}+(k+2)d_ia_{i,k})$.
Since $b_{i,k}=2k!d_i^{k-1}c_i$, we get $b_{i,k+1}=2(k+1)!d_i^{k}c_i$.

Also,
\begin{align*}
a_{i,k+1}&=b_{i,k}+(k+2)d_ia_{i,k}\\
&=2k!c_id_i^{k-1}+2k!(k+2)d_i^kc_i\left(\frac{k+1}{2}\frac{d_i}{c_i}+\frac{k-1}{2d_i}\right)\\
&=2(k+1)!c_id_i^k\left(\frac{1}{(k+1)d_i}+\frac{k+2}{k+1}\left(\frac{k+1}{2}\frac{d_i}{c_i}+\frac{k-1}{2d_i}\right)\right)\\
&=2(k+1)!c_id_i^k\left(\frac{k}{2d_i}+\frac{k+2}{2}\frac{d_i}{c_i}\right).
\end{align*}
\end{proof}
\section{Proof of theorem \ref{th:convergenceG}}
In this lemma, we propose to find the uniform-limit function for the function $G_k(z)=\sum_{i=1}^n\frac{h_{i,k}(z)}{g_{i,k}(z)}$ in the rectangle $\mathcal{R}_{\epsilon}$. For that, we will first begin by finding the uniform limit functions of  $h_{i,k}$ and $g_{i,k}$. 
\label{appendix:convergenceG}
\begin{lemma}
In the rectangle  $\mathcal{R}_{\epsilon}$, the sequence of functions $(h_{i,k})_k$ converges uniformly to $h_{i,\infty}$ given by:
$$
h_{i,\infty}(z)=e^{\frac{1}{d_iz}}
$$

Also, the sequence of functions $(g_{i,k})_k$ converges uniformly to $g_{i,\infty}$ given by:
$$
g_{i,\infty}=c_i\left(\frac{1}{c_id_i^2}+\frac{1}{d_i^4}\right)\left(V_{\infty,i}z-1\right)
$$
where $V_{\infty,i}=\lim_{k\to\infty}\frac{kb_{i,k}}{a_{i,k}}=\frac{2}{\frac{d_i}{c_i}+\frac{1}{d_i}}$.
\end{lemma}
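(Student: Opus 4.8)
The plan is to establish the two uniform convergences separately, since the $z$-dependence sits entirely in $h_{i,k}$ while $g_{i,k}$ is affine in $z$ with purely numerical coefficients. Throughout I would use that on $\mathcal{R}_{\epsilon}$ the variable is bounded away from the origin: because $\mathrm{Re}(z)\ge \tau_{\mathrm{min}}>0$ we have $|z|\ge\tau_{\mathrm{min}}$, so $u\triangleq\frac{1}{d_iz}$ satisfies $|u|\le M$ with $M\triangleq\frac{1}{d_i\tau_{\mathrm{min}}}$ on the whole rectangle. All functions involved are holomorphic on a neighbourhood of the compact $\mathcal{R}_{\epsilon}$, which is what lets us later feed the result into Hurwitz' theorem.

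For $h_{i,k}(z)=\bigl(1+\frac{u}{k}\bigr)^{k+2}$ I would pass to logarithms. For $k\ge 2M$ the point $1+\frac{u}{k}$ lies in the disc of radius $\frac12$ about $1$, so the principal logarithm is well defined and $\log h_{i,k}(z)=(k+2)\log\bigl(1+\frac{u}{k}\bigr)$. Using the expansion $\log(1+w)=w+r(w)$ with $|r(w)|\le|w|^2$ uniformly for $|w|\le\frac12$, this gives
$$\log h_{i,k}(z)=u+\frac{2u}{k}+(k+2)\,r\!\left(\frac{u}{k}\right),$$
whose last two terms are bounded in modulus by $\frac{2M}{k}+\frac{(k+2)M^2}{k^2}$, hence tend to $0$ uniformly on $\mathcal{R}_{\epsilon}$. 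Exponentiating and invoking the uniform continuity of $\exp$ on the bounded set $\{|w|\le M+1\}$ then yields $h_{i,k}\to e^{u}=e^{1/(d_iz)}=h_{i,\infty}$ uniformly.

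For $g_{i,k}(z)=C_k\,(S_kz-1)$ with $C_k=\frac{\sqrt{k}\,e^{k}}{\sqrt{2\pi}\,d_i^{k+2}k^{k+2}}a_{i,k}$ and $S_k=\frac{k b_{i,k}}{a_{i,k}}$, I would simply track the two scalar sequences. Substituting the Stirling equivalent \eqref{eq:equivalence} of $a_{i,k}$ makes the factors $\sqrt{2\pi}$, $e^{\pm k}$, $d_i^{k+2}$ and $k^{k+2}$ cancel, so $C_k\to C_\infty\triangleq c_i\bigl(\frac{1}{c_id_i^2}+\frac{1}{d_i^4}\bigr)$; and the closed forms of $a_{i,k},b_{i,k}$ from Lemma~\ref{lemma:Fn} give $S_k=\frac{k}{\frac{k+1}{2}\frac{d_i}{c_i}+\frac{k-1}{2d_i}}\to\frac{2}{\frac{d_i}{c_i}+\frac{1}{d_i}}=V_{\infty,i}$. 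Because $g_{i,k}$ is affine in $z$, uniformity is then automatic: $g_{i,k}(z)-g_{i,\infty}(z)=(C_kS_k-C_\infty V_{\infty,i})z-(C_k-C_\infty)$ and $|z|\le\tau_{\mathrm{max}}+\epsilon$ on the rectangle, so the supremum of the difference is $O\bigl(|C_kS_k-C_\infty V_{\infty,i}|+|C_k-C_\infty|\bigr)\to 0$.

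The hard part will be the uniform convergence of $h_{i,k}$: pointwise convergence of $\bigl(1+\frac{u}{k}\bigr)^{k+2}$ to $e^{u}$ is classical, but upgrading it to uniform convergence over the complex rectangle needs the honest uniform remainder bound above, which is available precisely because $\mathcal{R}_{\epsilon}$ keeps $z$ at a fixed positive distance from $0$ and hence $|u|$ bounded. The $g_{i,k}$ statement is routine once \eqref{eq:equivalence} is in hand, the affine structure in $z$ making uniform convergence immediate from the convergence of the two numerical coefficients.
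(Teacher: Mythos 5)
Your proof is correct and follows essentially the same route as the paper: it splits the claim into the $h_{i,k}$ and $g_{i,k}$ parts, reduces the first to the compact-uniform convergence of $\left(1+\frac{u}{k}\right)^{k+2}$ to $e^{u}$ with $u=\frac{1}{d_iz}$ (which the paper simply cites as a known lemma, while you prove it via the logarithm expansion with a uniform remainder bound), and obtains the second from the Stirling equivalence \eqref{eq:equivalence} together with the affine dependence on $z$. Your version is merely more self-contained, supplying the bookkeeping (boundedness of $|u|$ on $\mathcal{R}_{\epsilon}$, the exponent $k+2$ versus $k$, and the coefficient limits $C_k\to C_\infty$, $S_k\to V_{\infty,i}$) that the paper leaves implicit.
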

\begin{proof}

The uniform convergence of $h_{i,k}$ to $h_{i,\infty}$ is a by-product of the following known result:
\begin{lemma}
Over a compact set the sequence function $(1+\frac{z}{n})^n$ converges uniformly to $e^z$.
\end{lemma}
The uniform convergence of $g_{i,k}$ to $g_{i,\infty}$ is obtained by using the asymptotic equivalent of $a_{i,k}$ given in (\ref{eq:equivalence}).
\end{proof}
The uniform convergence of $h_{i,k}$ to $h_{i,\infty}$ and of $g_{i,k}$ to $g_{i,\infty}$ does not ensure the uniform convergence of $\frac{g_{i,k}}{h_{i,k}}$ to $\frac{g_{i,\infty}}{h_{i,\infty}}$. 

Other extra conditions are needed as it will be noticed in the following lemma:

\begin{lemma}
\label{lemma:convuni}
Let $f_k$ and $g_k$ denote sequences of continuous functions over a compact $C$. Assume that $g_k$ is bounded over $C$ away  from zero uniformly  in $k$ and in $z$, i.e there exists a constant $M$ such that:
$$\forall k\in\mathbb{N}, \forall z \in C \hspace{0.1cm} |g_k(z)| > M.$$

Assume also that $f_k$ and $g_k$ converge uniformly to $f_{\infty}$ and $g_{\infty}$. Then, $\frac{f_k}{g_k}$ converges uniformly to $\frac{f_{\infty}}{g_{\infty}}$ over the compact $C$.
\begin{proof}
Since $f_k$ and $g_k$ are continuous, their uniform limits $f_{\infty}$ and $g_{\infty}$ are also continuous. Therefore, there exists constant reals $M_f$, $M_g$  such that: 

$$\forall z \in C,  \left|f_{\infty}(z)\right| \leq M_f \ \ \ \mathrm{and} \ \ \ \left|g_{\infty}(z)\right| \leq M_g. $$ 
Since for all $k\in\mathbb{N}$, $|g_k(z)| > M$, we have $|g_{\infty}(z)| > M$
To prove the uniform convergence of $\frac{f_k}{g_k}$ towards $\frac{f_{\infty}}{g_{\infty}}$, it is sufficient to prove that $\displaystyle\sup_{z\in C}\left|\frac{f_k}{g_k}-\frac{f}{g}\right|$ converges to zero as $k$ tends to infinity. 
We have:
\begin{align*}
\sup_{z\in C}\left|\frac{f_k}{g_k}-\frac{f}{g}\right|&=\sup_{z\in C}\left|\frac{f_kg-fg_k}{g_kg}\right|\\
&\leq \sup_{z\in C}\left|\frac{f_kg-fg_k}{M^2}\right|\\
&\leq \frac{1}{M^2}\left(\sup_{z\in C}\left|f_kg-fg\right|+\sup_{z\in C}\left|fg-fg_k\right|\right)\\
&\leq \frac{1}{M^2}\left(M_g\sup_{z\in C}\left|f_k-f\right|+M_f\sup_{z\in C}\left|g_k-g\right|\right) \\
&\xrightarrow[k\to\infty]{}{0}\\
\end{align*}
which proves that $\frac{f_k}{g_k}$ converges uniformly to $\frac{f_\infty}{g_{\infty}}$.
\end{proof}
\end{lemma}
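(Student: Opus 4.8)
The plan is to reduce the claim to a direct supremum estimate on $C$, controlling the difference $\frac{f_k}{g_k}-\frac{f_\infty}{g_\infty}$ by the two individual convergence rates $\sup_C|f_k-f_\infty|$ and $\sup_C|g_k-g_\infty|$, both of which vanish by hypothesis.

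First I would pin down the limit functions. Since $f_k$ and $g_k$ are continuous and converge uniformly on the compact $C$, the limits $f_\infty$ and $g_\infty$ are themselves continuous on $C$, hence bounded: there exist constants $M_f,M_g$ with $|f_\infty(z)|\le M_f$ and $|g_\infty(z)|\le M_g$ for all $z\in C$. Passing to the limit in the hypothesis $|g_k(z)|> M$ gives $|g_\infty(z)|\ge M>0$, so $g_\infty$ never vanishes on $C$ and the target quotient $f_\infty/g_\infty$ is well defined and continuous.

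The central step is an algebraic one: put the difference over a common denominator and split the numerator by adding and subtracting the cross term $f_\infty g_\infty$,
$$
\frac{f_k}{g_k}-\frac{f_\infty}{g_\infty}
=\frac{f_k g_\infty - f_\infty g_k}{g_k g_\infty}
=\frac{(f_k-f_\infty)\,g_\infty + f_\infty\,(g_\infty-g_k)}{g_k g_\infty}.
$$
Because $|g_k(z)|>M$ and $|g_\infty(z)|\ge M$, the denominator satisfies $|g_k g_\infty|\ge M^2$. Applying the triangle inequality to the numerator together with the bounds $|g_\infty|\le M_g$ and $|f_\infty|\le M_f$ yields, pointwise in $z$,
$$
\left|\frac{f_k}{g_k}-\frac{f_\infty}{g_\infty}\right|
\le \frac{1}{M^2}\left(M_g\,|f_k-f_\infty| + M_f\,|g_\infty-g_k|\right).
$$
Taking the supremum over $z\in C$ and letting $k\to\infty$, uniform convergence forces both $\sup_C|f_k-f_\infty|\to0$ and $\sup_C|g_k-g_\infty|\to0$, so the right-hand side tends to zero, which is exactly uniform convergence of $f_k/g_k$ to $f_\infty/g_\infty$.

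As for the main obstacle: there is no deep difficulty here, as this is essentially a routine $\varepsilon$-estimate. The two points that actually require care are (i) ensuring the lower bound on the denominator survives the passage to the limit, which is precisely why the hypothesis demands a bound $M$ \emph{uniform in $k$} rather than merely $|g_\infty|>0$ — the latter would not control $1/g_k$ uniformly, since $g_k g_\infty$ could drift toward zero and let the quotient blow up; and (ii) the add-and-subtract decomposition, which cleanly separates the error into a part governed by the convergence of $f_k$ and a part governed by the convergence of $g_k$, so that each factor is controlled by its own uniform limit. These two ingredients are exactly what make the estimate work.
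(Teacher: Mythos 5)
Your proof is correct and follows essentially the same route as the paper's: the same add-and-subtract decomposition $(f_k-f_\infty)g_\infty+f_\infty(g_\infty-g_k)$ over the common denominator $g_kg_\infty$, the same lower bound $|g_kg_\infty|\geq M^2$, and the same bounds $M_f$, $M_g$ from continuity on the compact set. A minor point in your favor: you correctly state that the limit of $|g_k(z)|>M$ yields only the non-strict bound $|g_\infty(z)|\geq M$ (which suffices), where the paper asserts strict inequality.
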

Since $|h_{i,k}(z)|> 1$ over $\mathcal{R}_{\epsilon}$, $h_{i,k}$  satisfies the condition of lemma \ref{lemma:convuni}. Applying this lemma on the functions $g_{i,k}$ and $h_{i,k}$, we prove that $\frac{g_{i,k}}{h_{i,k}}$ converges uniformly to $\frac{g_{i,\infty}}{h_{i,\infty}}$. Consequently,
$G_k(z)=\displaystyle\sum_{i=1}^n\frac{g_{i,k}(z)}{h_{i,k}(z)}$ converges uniformly over $\mathcal{R}_{\epsilon}$ to $G_{\infty}(z)=\displaystyle\sum_{i=1}^n \frac{g_{i,\infty}(z)}{h_{i,\infty}(z)}$.

\section{Proof of Theorem \ref{th:dif}}
\label{appendix:dif}
The proof is performed by induction on $n$. For $n=1$, the result is straightforward. Let $n\in \mathbb{N}^*$ be a given integer, and assume that the result holds true for all $k\leq n$, and all functions $f$ of the form given by (\ref{eq:f}).
Assume that there exists $a_i$, $b_i$ and $\alpha_i$ three sequences of $n+1$ strictly positive real scalars such that the function 
$$
f(x)=\sum_{i=1}^{n+1}(a_ix-b_i)e^{-\alpha_ix}
$$
admits more than one positive zero. Let  $x_1$ be the first smallest zero of $f$ on $\mathbb{R}^{+}$, 

Without loss of generality, we can assume that all the $\alpha_i$ are two by two different and that $\alpha_{n+1}=\displaystyle\min_{1\leq i\leq n+1}\alpha_i$.
 Since $f$ is strictly negative in zero and is positive for large values of $x$,  $f$ should change its sign at at least one zero.  
In the following we will consider only the case when $f$ changes its sign at $x_1$. The other case coule be treated in the same  way. Let $x_2$ be the second smallest zero of $f$ on $\mathbb{R}^+$. Under this condition, we distinuish the following cases:
\begin{itemize}
\item $f$ changes its sign at $x_1$ and at $x_2$.
\item $f$ changes its sign only at $x_1$.
\end{itemize}
For the both cases, we can prove that the second derivative of
\begin{align*}
g_m(x)&=e^{\left(\alpha_{n+1}-\frac{1}{m}\right)x}f(x)\\
&=\sum_{i=1}^{n}(a_ix-b_i)e^{-\left(\alpha_i-\alpha_{n+1}+\frac{1}{m}\right)x}+\left(a_{n+1}x-b_{n+1}\right)e^{-\frac{x}{m}}, \\
&\hspace{0.2cm}\textnormal{for} \hspace{0.05cm}m\in\mathbb{N}^*
\end{align*}
has three zeros. More particularly, we have the following:

\underline{Case 1: $f$ changes its sign at $x_1$ and at $x_2$}

Since $f(0)<0$, $f(x)<0$ for $x \leq x_1$. Therefore, for $x\geq x_2$ and in the vicinity of $x_2$, $f(x) <0$ for $x \geq x_2$. Since $f(x)>0$ for $x$ large enough, $f$ should have a third zero $x_3>x_2$. 

For all integers $m$, we note that $f$ and $g_m$ have the same number of zeros. Using Rolles theorem, it can be proved that the derivative of $g_m$ which we denote $g^{(1)}_m$ and which is given by:
\begin{align}
g^{(1)}_m(x)&=\sum_{i=1}^n\left[-(\alpha_i-\alpha_{n+1}+\frac{1}{m})a_ix+b_i(\alpha_i-\alpha_{n+1}+\frac{1}{m})\right.\\
&\left.+a_i\right]\times e^{-(\alpha_i-\alpha_{n+1}+\frac{1}{m})x}+a_{n+1}e^{-\frac{1}{m}x}-\frac{1}{m}\left(a_{n+1}x\right.\\
&\left.-b_{n+1}\right)e^{-\frac{x}{m}} \label{eq:g1m}
\end{align}
has at least three  zeros, since $g^{(1)}_m(x)$ tends to zero as $x$ tends to infinity.

Also  again by using the Rolle's theorem, we conclude that the second derivative of $g_m$ denoted by $g^{(2)}_m(x)$ has at least two zeros.

\underline{Case 2: $f$ changes its sign at only one zero}
In this case, we can also prove that the first derivative of $g_m$ has three zeros. Actually, at $x_2$, the first derivative of $g_m$ must be also zero, since $x_2$ is a local minimum for $f$ and hence for $g_m$. As $g_m$ tends to zero when $x$ tends to infinity, $g^{(1)}_m$ has two zeros between $]x_1,x_2[$ and $]x_2,\infty[$. Consequently, in total, $g^{(1)}_m$ has at least three zeros, and therefore, the second derivative of $g_m$ denoted  has at least two zeros.

Taking the derivative of (\ref{eq:g1m}), $g^{(2)}_m(x)$ writes as:
\begin{align*}
g^{(2)}_m(x)&=\sum_{i=1}^n(\alpha_i-\alpha_{n+1}+\frac{1}{m})\left[a_i(\alpha_i-\alpha_{n+1}+\frac{1}{m})x\right.\\
&\left.-2a_i-b_i(\alpha_i-\alpha_{n+1}+\frac{1}{m})\right]e^{-(\alpha_i-\alpha_{n+1}+\frac{1}{m})x}\\
&-\frac{2}{m}a_{n+1}e^{-\frac{1}{m}x}+\frac{1}{m^2}(a_{n+1}x-b_{n+1})e^{-\frac{x}{m}}
\end{align*}
Extending the definition domain of $g^{(2)}_m$ to $\mathbb{C}^+=\left\{z=x+iy, x >  0\right\}$, we note that for every compact in $\mathbb{C}^+$, $g^{(2)}_m$ converges uniformly to $g_{\infty}$ given by:
\begin{align*}
g_{\infty}(z)&=\sum_{i=1}^n(\alpha_i-\alpha_{n+1})\left(a_i(\alpha_i-\alpha_{n+1})z-2a_i\right.\\
&\left.-b_i(\alpha_i-\alpha_{n+1})\right)e^{-(\alpha_i-\alpha_{n+1})z}
\end{align*}
Let $\mathcal{C}$ be the contour corresponding to the rectangle 
\begin{align*}
R_{\epsilon}&=\left\{x+iy, x\in \left[\displaystyle{\inf_{m,i}}\frac{2a_i+b_i(\alpha_i-\alpha_{n+1})+\frac{1}{m}}{a_i(\alpha_i-\alpha_{n+1}+\frac{1}{m})},\right.\right.\\
&\left.\left.\displaystyle{\sup_{m,i}}\frac{2a_i+b_i(\alpha_i-\alpha_{n+1})+\frac{1}{m}}{a_i(\alpha_i-\alpha_{n+1}+\frac{1}{m})}\right], y\in\left[-\epsilon,\epsilon\right]\right\},
\end{align*}
$\epsilon$ is chosen such that $|g_{\infty}|$ is bounded above zero in $\mathcal{C}$ and has no complex valued zeros. 
 Then referring to Hurwitz theorem, $g_{\infty}$ and $g^{(2)}_m$ will have the same number of zeros in $R_\epsilon$ for large enough values of $m$, which is in contradiction with the induction assumption.
\bibliographystyle{IEEE}
\bibliography{./biblio}

\begin{thebibliography}{10}

\bibitem{klajman}
M.~Klajman and A.~G. Constantinides,
\newblock ``{A weighted mixed statistics algorithm for blind source
  separation},''
\newblock {\em EUSIPCO}, vol. II, pp. 115--118, 2002.

\bibitem{duhamel}
M.~Corlay, P.~Duhamel, and M.~Charbit,
\newblock ``{A Combination of Statistical and Structural Approaches in Blind
  Equalization},''
\newblock {\em 34th Asilomar Conference on Signals, Systems and Computers},
  vol. 2, pp. 1207--1211, 2000.

\bibitem{djebbar}
A.~Djebbar-Bouzidi, K.~Abed-Meraim, and A.~Djebbari,
\newblock ``{Blind Channel Equalization and Carrier Frequency O®set Estimation
  for MC-CDMA systems Using Guard Interval Redundancy and Excess Codes},''
\newblock {\em International Journal of Electronics and Communications}, vol.
  63, pp. 220--225, 2009.

\bibitem{taoufik}
T.~Ben Jabeur, K.~Abed-Meraim, and H.~Boujemaa,
\newblock ``{Blind channel shortening in OFDM system using nulltones and cyclic
  prefix},''
\newblock {\em ICASSP}, pp. 3041--3044, March 2008.

\bibitem{geli}
G.~Li and Z.~Ding,
\newblock ``{Semi-Blind Channel Identification for Individual Data Bursts in
  GSM Wireless Systems},''
\newblock {\em Signal Processing}, vol. 80, no. 10, pp. 2017--2031, October
  2000.

\bibitem{palicot}
A.~A. Rontogiannis, K.~Berberifis, A.~Marava, and J.~Palicot,
\newblock ``{Efficient Semi-Blind Estimation of Multipath Channel Parameters
  Via a Delay Decoupling Optimization Approach},''
\newblock {\em Signal Processing}, vol. 85, no. 12, pp. 2394--2411, 2005.

\bibitem{buchoux}
V.~Buchoux, O.~Capp\'{e}, E.~Moulines, and A.~Gorokhov,
\newblock ``{On the Performance of Semi-Blind Subspace-Based Channel
  Estimation},''
\newblock {\em IEEE Transactions on Signal Processing}, 2000.

\bibitem{ICASSP.cardoso.93}
Jean-François Cardoso and Eric Moulines,
\newblock ``Minimum contrast estimation with application to array processing,''
\newblock in {\em International Conference on Acoustics Speech and Signal
  Processing}, 1993, vol.~4, pp. 384--387.

\bibitem{Wielonsky.04}
F.~Wielonsky,
\newblock ``{A Rolle's Theorem for Real Exponential Polynomials in the Complex
  Domain},''
\newblock {\em Journal des Math\'ematiques Pures et Appliqu\'ees}, vol. 80, no.
  4, pp. 389--408, May 2001.

\bibitem{jalil}
A.~Aissa-El-Bey and K.~Abed-Meraim,
\newblock ``{Blind Identification of Sparse SIMO Channels Using Maximum A
  Posteriori Approach},''
\newblock {\em Eusipco}, August 2008.

\bibitem{Zeng}
Y.~Zeng and T.~Sang Ng,
\newblock ``{A Blind MIMO Channel Estimation Method Robust to Order
  Overestimation},''
\newblock {\em Signal Processing}, vol. 84, no. 2, 2003.

\bibitem{gretsi}
V.~Buchoux, E.~Moulines, and O.~Capp\'{e},
\newblock ``Estimation semi-autodidacte par m\'{e}thode sous-espace pour
  syst\'{e}mes de r\'{e}ception multicapteurs,''
\newblock {\em Actes du Colloque du GRETSI}, Septembre 1999.

\bibitem{boyd}
S.~Boyd and L.~Vandenberghe,
\newblock {\em Convex Optimization},
\newblock {Cambridge University Press}, 2004.

\bibitem{remmert}
R.~Remmert,
\newblock {\em Theory of Complex Functions}, vol.~22 of {\em Graduate Texts in
  Math},
\newblock Springer-Verlag, 1991.

\bibitem{Clapham}
C.~R.~J. Clapham,
\newblock {\em Introduction to Mathematical Analysis},
\newblock Routledge, 1973.

\bibitem{Stirling}
J.~Havil,
\newblock {\em Gamma: Exploring Euler's constant},
\newblock Princeton, NJ: Princeton University Press, 2003.

\end{thebibliography}

\begin{biographynophoto}{Abla Kammoun}
 was born in Sfax, Tunisia. She received the Engineering degree in Signal and Systems from the Tunisia Polytechnic School, the master degree and the Phd degree in digital communications from Telecom Paris Tech (then Ecole Nationale Sup\'erieure des T\'el\'ecommunications, ENST). 

Since June 2010, she is a post-doctoral researcher in TSI department in Telecom Paris Tech. 

Her research interests include performance analysis, random matrix theory and semi-blind channel estimation.
\end{biographynophoto}
\begin{biographynophoto}{Karim Abed-Meraim}
Karim ABED-MERAIM was born  in 1967. He received the State Engineering Degree  from
Ecole Polytechnique, Paris, FRANCE, in 1990, the State Engineering Degree from  
Ecole Nationale Supérieure des Télécommunications (ENST), Paris, FRANCE, in 1992,
the M.Sc. degree from Paris XI University, Orsay, FRANCE, in 1992  and the Ph.D
degree from the Ecole Nationale Supérieure des Télécommunications (ENST), Paris,
FRANCE, in 1995 (in the field of Signal Processing and communications).

From 1995 to 1998, he has been  a research staff at the Electrical Engeneering
Department of the University of Melbourne where he worked on several research
project related to "Blind System Identification for Wireless Communications", "Blind
Source Separation", and "Array Processing for Communications", respectively.

He currently is Associate Professor  at the Signal and Image Processing Department
of Telecom ParisTech. His research interests are in  signal processing for
communications and include system identification, multiuser detection, space-time
coding, adaptive filtering and tracking, array processing and performance analysis.
He has about 300 scientific publications including 60 journal papers, 6 book
chapters and 4 patents. 

Dr Karim ABED-MERAIM was is charge of the  "Equalization" research operation within
the GT9/GDR-PRC ISIS (a research operation within the National Scientific Research
Centre of France (CNRS)) and was  leader of a research team within Telecom ParisTech
working on "Signal Processing for Communications".

Within his research activity, he has organized and chaired several conference
sessions (ICOTA'98, DSP'98 workshop, ISPCS'98, ISSPA'99, SSP2001, ISCCSP'04,
ISSPA'05, ...). He is  referee for several journals including IEEE-Tr-SP,
IEEE-Tr-IT, Sig. Proc. Journal, IEEE-SP-Letters, JSAC, and IEEE-Tr-Comm. He was the
technical Chairman of ISSPA'2001 and ISSPA'2003 (International Symposium on Signal
Processing and its Applications) ans  the conference Chair of WOSPA 2008.

Dr. ABED-MERAIM is an IEEE senior member and was associate editor for the IEEE
TRANSACTIONS ON SIGNAL PROCESSING (from 2001 to 2004).
\end{biographynophoto}
\begin{biographynophoto}{Sofi\`ene Affes} (S94, M95, SM04) received the Dipl\^ome d'Ing\'enieur in 
telecommunications in 1992, and the Ph.D. degree with honors in signal 
processing in 1995, both from the École Nationale Supérieure des 
Télécommunications (ENST), Paris, France.

He has been since with INRS-EMT, University of Quebec, Montreal, Canada, 
as a Research Associate from 1995 till 1997, as an Assistant Professor 
till 2000, then as an Associate Professor till 2009. Currently he is a 
Full Professor in the Wireless Communications Group. His research 
interests are in wireless communications, statistical signal and array 
processing, adaptive space-time processing and MIMO. From 1998 to 2002 
he has been leading the radio design and signal processing activities of 
the Bell/Nortel/NSERC Industrial Research Chair in Personal 
Communications at INRS-EMT, Montreal, Canada. Since 2004, he has been 
actively involved in major projects in wireless of PROMPT (Partnerships 
for Research on Microelectronics, Photonics and Telecommunications).

Professor Affes was the co-recipient of the 2002 Prize for Research 
Excellence of INRS. He currently holds a Canada Research Chair in 
Wireless Communications and a Discovery Accelerator Supplement Award 
from NSERC (Natural Sciences \& Engineering Research Council of Canada). 
In 2006, Professor Affes served as a General Co-Chair of the IEEE 
VTC¿2006-Fall conference, Montreal, Canada. In 2008, he received from 
the IEEE Vehicular Technology Society the IEEE VTC Chair Recognition 
Award for exemplary contributions to the success of IEEE VTC. He 
currently acts as a member of the Editorial Board of the IEEE 
Transactions on Wireless Communications, of the IEEE Transactions on 
Signal Processing, and of the Wiley Journal on Wireless Communications \& 
Mobile Computing.

\end{biographynophoto}

\end{document}